\numberwithin{equation}{section}
\newcommand{\R}{\mathbb R}
\newcommand{\N}{\mathbb N}
\newcommand{\Z}{\mathbb Z}
\newcommand{\C}{\mathbb C}
\newcommand{\Rangle}{\rangle\!\rangle}
\newcommand{\Langle}{\langle\!\langle}
\definecolor{darkgreen}{rgb}{0,.6,0}
\long\def\@makefnmark{%
        \hbox {\@textsuperscript {\sffamily\@thefnmark}}
        }
\definecolor{darkercyan}{RGB}{0,169,170}
\definecolor{MyDarkBlue}{rgb}{0,0.29,0.7}
\newtheoremstyle{plain}
  {10pt}
  {10pt}
  {\it \sffamily}
  {0pt}
  {\bf \sffamily}
  {}
  {\newline}
  {}
\newtheoremstyle{definition}
  {10pt}
  {10pt}
  {\sffamily}
  {0pt}
  {\bf \sffamily}
  {}
  {\newline}
  {}
\renewenvironment{proof}{{\textit{\sffamily Proof.}}}{\hfill\qedsymbol}
\theoremstyle{plain}
\newtheorem{theorem}{Theorem}[section]
\newtheorem{lemma}[theorem]{Lemma}
\newtheorem{proposition}[theorem]{Proposition}
\newtheorem*{theorem*}{Theorem}
\newtheorem*{lemma*}{Lemma}
\theoremstyle{definition}
\newtheorem*{remark}{Remark}
\xpatchcmd{\@makefnstartbox}{%
\footnotesize}%
{\footnotesize\sffamily}{}{}
\xpatchcmd{\@makefnmark}{\normalfont}{\sffamily}{}{}
\begin{document}

\title{\sffamily\textbf{Application of a Fourier-Type Series Approach based on Triangles of Constant Width to Letterforms}}
\author[1]{\sffamily Micha Wasem}
\author[2]{\sffamily Florence Yerly}
\affil[1,2]{\normalsize \sffamily School of Engineering and Architecture, HES-SO University of Applied Sciences and Arts Western Switzerland, P\'erolles 80, 1700 Fribourg, Switzerland\\{\tt micha.wasem@hefr.ch}, {\tt florence.yerly@hefr.ch}}
\affil[1]{\normalsize \sffamily Faculty of Mathematics and Computer Science, UniDistance Suisse, Schinerstrasse 18, 3900 Brig, Switzerland\\{\tt micha.wasem@fernuni.ch}}
\date{\sffamily \today}
\maketitle
\sffamily
\begin{center}
\includegraphics[scale=0.58]{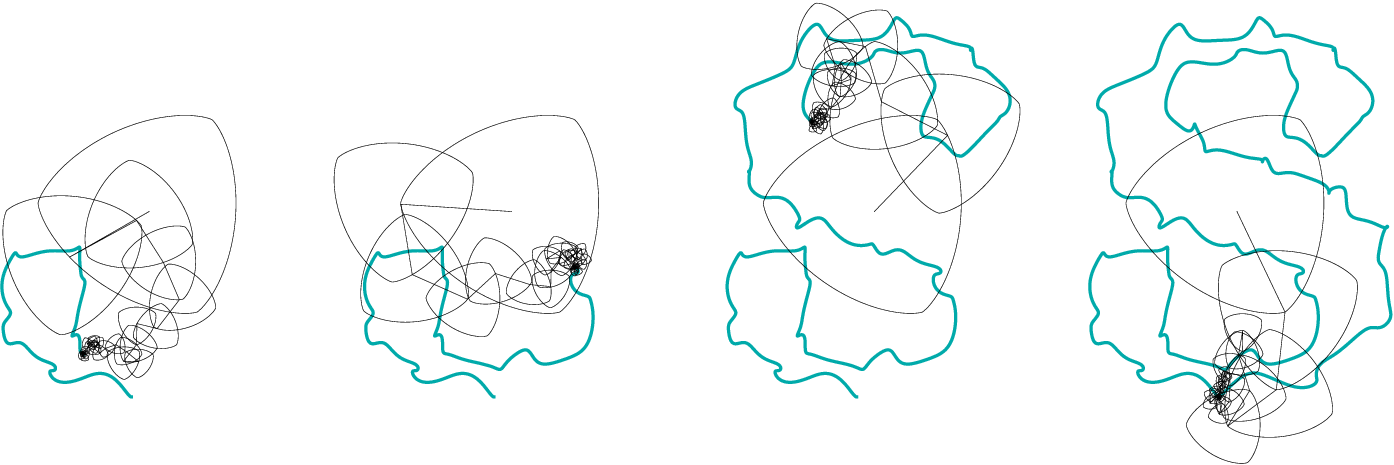}
\end{center}
\begin{abstract}
In this work, we present a novel approach to type design by using Fourier-type series to generate letterforms. 
We construct a Fourier-type series for functions in $L^2(S^1,\C)$ based on triangles of constant width instead of circles to model the curves and shapes that define individual characters. In order to compute the coefficients of the series, we construct an isomorphism $\mathcal R:L^2(S^1,\C)\to L^2(S^1,\C)$ and study its application to letterforms, thus presenting an alternative to the common use of B\'ezier curves.
The proposed method demonstrates potential for creative experimentation in modern type design.
\end{abstract}
\section{\sffamily Introduction}

Modern type design is largely shaped by the smooth behavior of Bézier curves. In this study, we introduce an alternative approach based on functional analysis that models letterform contours as periodic plane curves and examines their structure using Fourier analysis. Instead of representing contours as sums of circles, we employ triangles of constant width, a modification that can produce either smooth, symmetric contours or, conversely, deliberately irregular and rough ones.

In order to make this more precise, let $\mathrm L\in L^2(S^1,\C)$ be a (suitably regular\footnote{\sffamily For the needs of our paper, \emph{Hölder-continuity} is enough.}) parametrization of  a letterform contour. Then the Fourier series of $\mathrm L$ represents the contour as a uniformly convergent series of circles by a theorem of \cite{c_theory_1931}:
$$
\mathrm L(t) = \sum_{k\in \Z}  c_k \mathrm e^{\mathrm ikt}
$$
Truncating the above series leads to an approximation
\begin{equation}\label{truncatedfourierseries}
\mathrm L_m(t)=\sum_{k\in\Z \atop |k|\leqslant m} c_k \mathrm e^{\mathrm ikt}.
\end{equation}
\begin{figure}
\begin{center}
\includegraphics[scale=0.3]{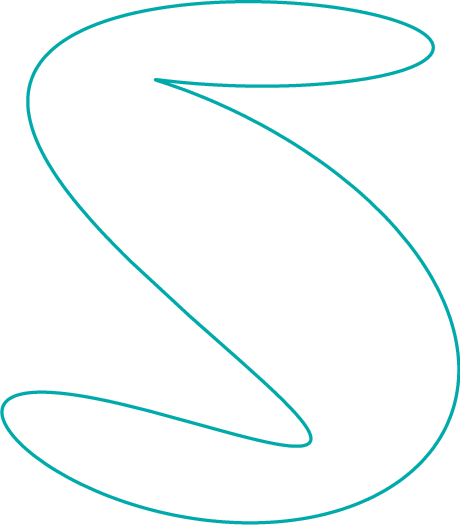}\qquad\includegraphics[scale=0.3]{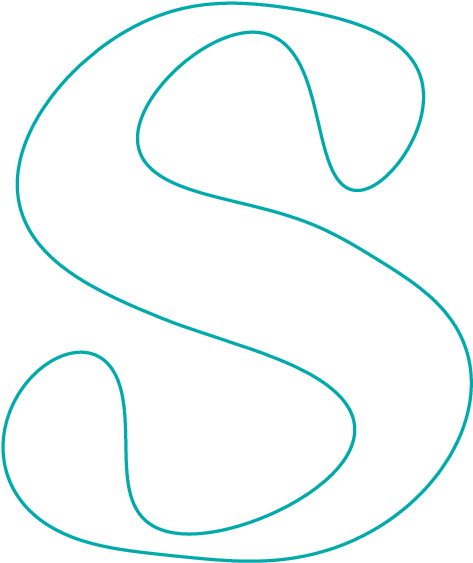}\qquad
\includegraphics[scale=0.3]{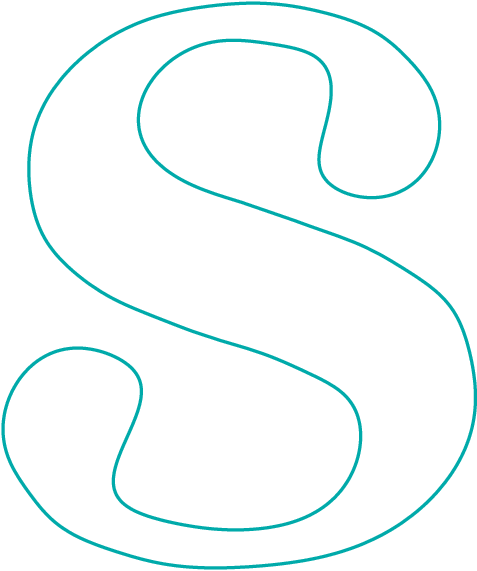}\qquad\includegraphics[scale=0.3]{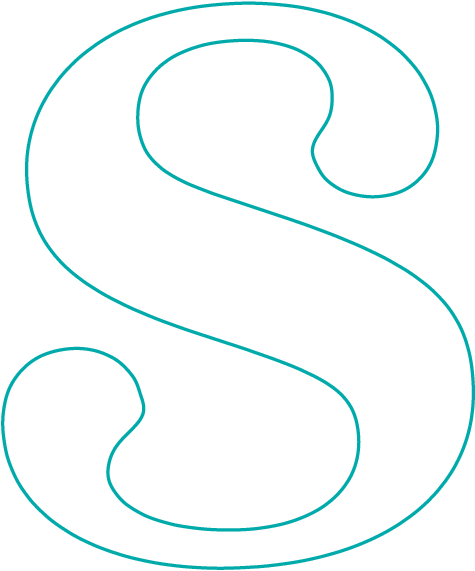}\\[3mm]
\includegraphics[scale=0.3]{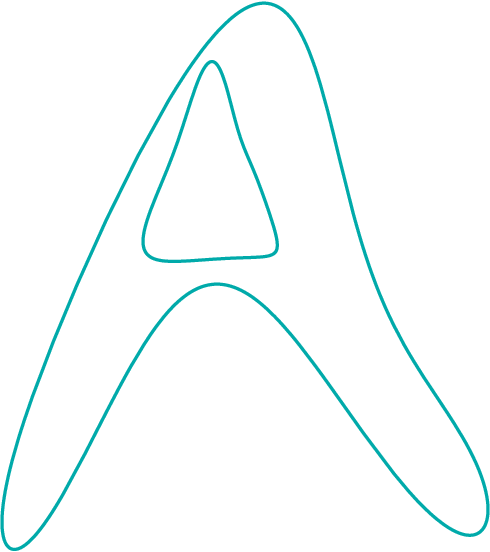}\qquad
\includegraphics[scale=0.4]{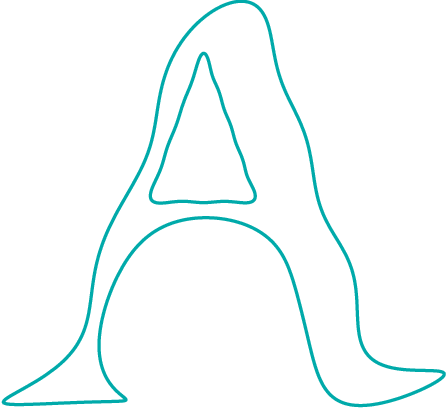}\qquad
\includegraphics[scale=0.4]{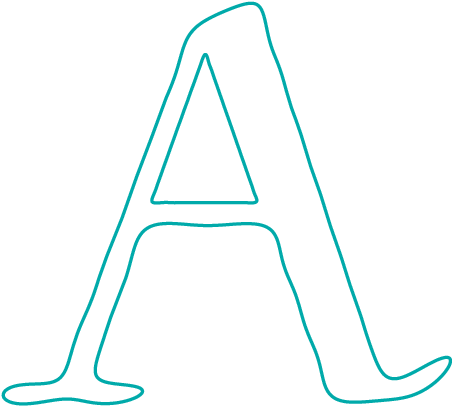}\qquad
\includegraphics[scale=0.4]{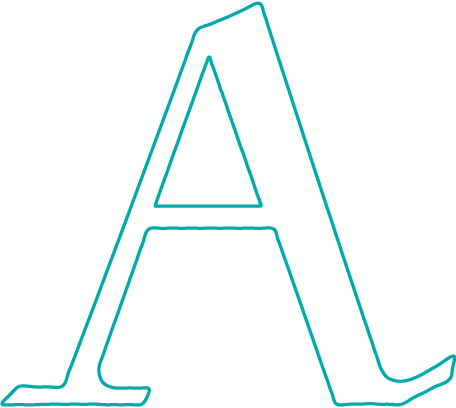}
\caption{Letters ``S'' from Romain 20 and ``A'' from Mitim Sigma obtained from a truncated Fourier series for the choices $m=4,10,25$ and $m=100$ as given by equation \eqref{truncatedfourierseries}.}\label{alice-fourier}
\end{center}
\end{figure}As an example, we have computed the Fourier series of a letter ``S'' from the Romain 20 font 
designed by Alice Savoie \cite{Romain20} and the one of a letter ``A'' from the font Mitim Sigma designed by Radim Pe\v sko \cite{MitimSigma} and visualized their truncated series (see Figure \ref{alice-fourier}).
It is worth noting that Fourier analysis has countless applications in a wide variety of fields and has already proven its value in the creation of mathematical art, see e.g. \cite{Farris2013}. In our context however, the truncated Fourier series provide us with a first attempt for transforming a letterform, but they offer a fairly limited artistic choice. First, one can choose the number of terms in the series, knowing that the larger the number, the more the modification will resemble the original letter as illustrated in Figure \ref{alice-fourier}. If the number of terms is small, the transformation will produce a letter resembling an inflated balloon. 
Secondly, one can modify a coefficient $c_k$ in equation \eqref{truncatedfourierseries}, which makes the result difficult to predict due to the global nature of the Fourier transform.

In this paper, we will instead consider a series representation
\begin{equation}\label{constantwidthseries}
\mathrm L(t) = \sum_{k\in\Z} \widetilde{c_k} \gamma(k t), 
\end{equation}
where $\gamma(t) = \mathrm e^{\mathrm i t}\left(1+a\cos(3t)-3\mathrm i a \sin(3t)\right)$ is a parametrization of a triangle of constant width as long as $a\in[0,\frac18]$ (see item (\ref{item2}) in the collection of basic facts about support functions), and where $a$ is a tuning parameter (see Figure \ref{fig:trianglesofconstantwidth}).
\begin{figure}
\begin{center}
\includegraphics[scale=0.88]{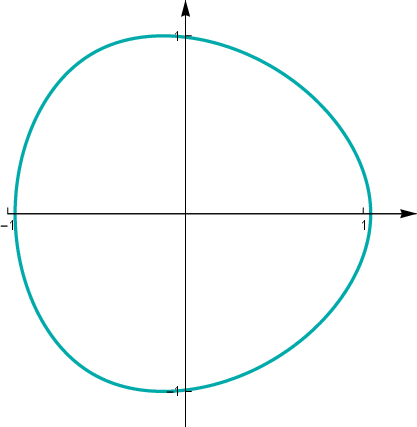}\qquad\includegraphics[scale=0.88]{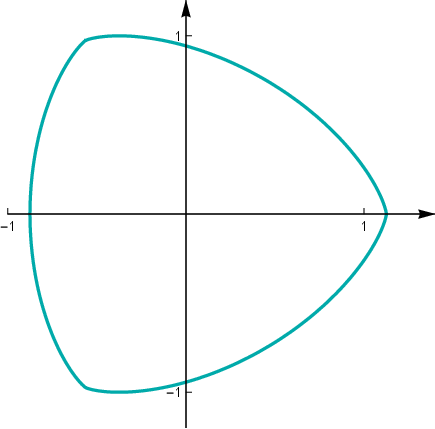}
\caption{The images of $\gamma$ for the choices $a=\frac{1}{24}$ and $a=\frac18$ as subsets of $\R^2\cong\C$ are triangles of constant width, which serve as a basis of a Fourier-type series expansion.}\label{fig:trianglesofconstantwidth}
\end{center}
\end{figure}
Observe that \eqref{constantwidthseries} is a generalization of the standard Fourier series since it coincides with it whenever $a=0$.
The choice of working with $\gamma$ is motivated as follows: our primary goal was to generalize the classical Fourier series by a corresponding series with another basic building block that generalizes the circle. For this -- as seen for instance in Wankel engines -- bodies with constant width serve as a natural choice to do so as they share the property of having constant width with the circle. If one takes $n$-gons with constant width for $n$ odd, they tend to become similar to a circle as $n$ becomes large and their symmetry groups become larger as well. In that sense, $n=3$ is a choice which differs most in terms of breaking symmetry from the circle and, as we will show, produces the most notable differences in the letterforms. 

Furthermore, as it turns out, the coefficients $\widetilde{c_k}$ in \eqref{constantwidthseries} are obtained as the standard Fourier coefficients of $\mathcal R(\mathrm L)$, where $\mathcal R$ is an isomorphism of $L^2(S^1,\C)$. Curiously, the isomorphism $\mathcal R$ restricts to a map $C^k(S^1,\C)\to C^k(S^1,\C)$ provided $a<{1}/(2^k(2^k+2))$ (see Theorem \ref{C^k-restriction}) and the result is sharp in the sense that if $a\geqslant 1/(2^k(2^k+2))$, then the image of a $C^k$ function under $\mathcal R$ will not be $C^k$ in general. Therefore, the tuning parameter $a$ can be understood as a regularity parameter which can be used to obtain ``quirks'' and textures in the letterforms, offering a formal method to generate everything from perfectly smooth symmetries to intentionally jagged and irregular contours. As established in Section \ref{type-design}, the series given by \eqref{constantwidthseries} even converges in $L^2(S^1,\C)$, if $\frac18<a<\frac13$, although in that case, $\gamma$ is a curve with self-intersections. In that case, the transformation of a letterform will result in a fractal-like letterform with self-intersections (see Figure \ref{1/5}).

To provide additional context, we summarize the prevailing methodology in modern type design, which is fundamentally local in nature and relies on Bézier curves. 
These curves -- popularized by Pierre Bézier but previously investigated by Pierre de Casteljau
\cite{bezier_mathematical_1986,bezier_essai_1977,hosaka_bezier_1992,van_to_development_1992,maqsood_novel_2020,fitter_review_2014,de_casteljau_outillages_1959}
-- are linear combinations of Bernstein polynomials \cite{bernstein_proof_nodate}. A planar Bézier curve of degree $n$ is uniquely determined by $n+1$ control points $p_0,\dots ,p_n\in \mathbb{R^{\mathnormal{2}}}$ and is given by

$$
t\mapsto \sum_{k=0}^n \binom nk t^k(1-t)^{n-k}p_k,  \qquad t\in[0,1].
$$

Because they are easy to scale, rotate, and store, Bézier curves possess several computational advantages. Although they cannot represent all shapes exactly -- for instance, a perfect circle cannot be realized -- they are dense in the space of continuous curves, so any shape can be uniformly approximated by a suitable collection of Bézier segments \cite{natanson_constructive_1964-1}.

The use of Bézier curves has therefore become standard in digital type design. TrueType fonts employ piecewise quadratic Bézier curves, whereas systems such as Metafont rely on piecewise cubic ones \cite{bigelow_font_2020}. This piecewise construction highlights the inherently local character of the method: each segment is controlled by a small number of points, and designers typically adjust curves by manipulating these local control structures.

A common criticism among type designers is that the use of B\'ezier curves leads typesetters to create similar fonts and symbols which are too clean, without irregularities or quirks. Indeed, some typeface softwares suggest improvements to the type designer -- for instance, FontLab \cite{Fontlab}
and Glyphs \cite{Glyphs}

have features which highlight certain control points of the B\'ezier curves where the curvature is very high or discontinuous, or lines that are not perfectly horizontal or vertical. This highlighting encourages users to correct their designs, thereby urging to standardize fonts\footnote{\sffamily Personal communication with Raphaela Häfliger, Alice Savoie, Kai Bernau, Nicolas Bernklau, Matthieu Cortat, Roland Früh and Radim Pe\v{s}ko, October 2023 at ECAL/Ecole cantonale d'art de Lausanne} (see also \cite{bbwebsite}).

In contrast to local, segment‑based techniques, our method adopts a global perspective by modeling entire letterform contours as periodic plane curves and analysing them through Fourier analysis, a direction that, to our knowledge, has not been explored in the existing literature. This global representation provides coherent control over the shape as a whole and makes it possible to introduce calibrated deviations from smoothness, yielding controlled roughness that is difficult to obtain with traditional Bézier‑based tools. Within this framework, transformations arise from Fourier‑type series in which circles are replaced by triangles of constant width.

Section \ref{section:2} provides the needed mathematical background and Section \ref{type-design} shows how the material can be applied to type design. In order to facilitate the reading, all mathematical proofs have been collected in the appendix.

\section{\textcolor{black}{Mathematical foundations}}\label{section:2}

\textcolor{black}{We develop here the mathematical tools that will enable us to construct the Fourier-based transformation by replacing the circle with triangles of constant width. With the Fourier series, we approximate a closed curve by a sum of circles moving around other circles. Each circle has its own radius and frequency. These circles are easy to parametrize with $t\mapsto c_k \mathrm e^{\mathrm{i}kt}$. The first step for our transformation is to compute a parametrization of the triangle of constant width and for that we will need support functions. We can then build our ``constant-width-triangle-Fourier-based'' transform and compute the inverse transformation and an inner product, giving us all the ingredients we need to calculate the transformation of a letterform contour. 
 }
\textcolor{black}{\subsection{Recall of some mathematical concepts}}
We will use the identification $S^1 \cong \mathbb R/2\pi\mathbb Z$ throughout this article and the standard inner product of $L^2(S^1,\mathbb C)$ will be given by
$$
\langle f,g\rangle = \frac{1}{2\pi}\int_0^{2\pi}\!\!f(t)\overline{g(t)}\,\mathrm dt
$$ so that $\{t\mapsto \mathrm e^{\mathrm ikt}\}_{k\in\mathbb Z}$ is a \emph{Hilbert basis} of $L^2(S^1,\mathbb C)$ with respect to $\langle\cdot{,}\cdot\rangle$ and the induced norm $\|f\|_{L^2}:=\sqrt{\langle f,f\rangle}$.

For $z,w\in\C$, we will denote by $[z,w]:=\{z(1-t)+tw,t\in[0,1]\}$ the line segment between $z$ and $w$. Similarly, we denote by $(z,w):=\{z(1-t)+tw,t\in(0,1)\}$ the corresponding line segment without endpoints. Recall that a set $K\subset\C$ is called \emph{convex}, if it contains $[z,w]$ for any two points $z,w\in K$. If its interior contains $(z,w)$ for any two points $z,w\in K$, it is called \emph{strictly convex}. If $\partial K$ admits a parametrization by a closed curve $\Gamma\in C^2(S^1,\C)$ with non-vanishing curvature, then $K$ is called \emph{strongly convex}.

\subsection{Support functions}
Boundaries of strictly convex sets admit parametrizations by so-called \emph{support functions} (see \cite{allemann_equilibria_2021}).

If $K\subset \C$ is strongly convex and $\partial K$ is of class $C^k$, $k\geqslant 2$, then there exists a support function $p\in C^k(S^1)$ such that $\partial K$ is parametrized by
$
t \mapsto p(t) u(t) + p'(t) u'(t),
$
where $u(t)=\mathrm e^{\mathrm it}$ (see \cite[Lemma 2.1]{allemann_equilibria_2021}).

If the boundary $\partial K$ of a strictly convex set $K\subset \C$ is parametrized by $\Gamma:S^1\to\C$,
$\Gamma(t) = p(t)u(t)+p'(t)u'(t)$, where $p$ is of class $C^2$, then we have the following elementary facts \cite[p. 86]{allemann_equilibria_2021}:
\begin{enumerate}
\item
The radius of curvature is given by $\rho=p+p''$. Note that this is not trivial since a priori, $\Gamma$ is only of class $C^1$, see  \cite[Corollary 2.2]{allemann_equilibria_2021} for a proof of this result.\\
\item \label{item2}
If $\rho\geqslant 0$ and $p(t+\pi)+p(t) \equiv d>0$, then $K$ is a body of constant width $d$.\\
\item The area of $K$ is given by
$
\displaystyle\frac{1}{2}\int_0^{2\pi}\!\! \left(p^2(t)-p'(t)^2\right)\,\mathrm dt.\\
$
\item The perimeter of $K$ (the length of $\partial K$) is given by
$\displaystyle
\int_0^{2\pi}\!\!\!\!p(t)\,\mathrm dt.
$
\end{enumerate}

\subsection{Triangles of constant width}\label{sec:2.2}
Let $p(t) = 1+a \cos(3t)$, where $a\geqslant 0$ and $$\gamma(t) = p(t)u(t) + p'(t)u'(t)\textcolor{black}{ = (1+a\cos(3t)+3\mathrm{i} a \cos(3t))\mathrm e^{\mathrm{i}t}}.$$ For $\gamma$ to bound a strictly convex set, we need its curvature radius $\rho$ to be non-negative while being zero only in isolated points. The condition on non-negativity is given by
$$
\rho(t) = p(t) + p''(t) =1-8a\cos(3t)  \geqslant 0,
$$
which implies $a\leqslant \frac18$. Note that if $a<\frac18$, then $\gamma$ bounds a strongly convex set and if $a=\frac18$, then $\rho$ has zeros in $0,\frac{2\pi}{3}$ and $\frac{4\pi}{3}$ within $[0,2\pi)$ so that $\gamma$ bounds a strictly convex set in this case.

Since for all choices $a\in[0,\frac18]$ we have $p(t) +p(t+\pi)\equiv 2$, the curve $\gamma$ bounds a body of constant width $2$ (see elementary fact (\ref{item2})).

We will later focus on the specific choice $a=\frac18$ for the following reason: The \emph{isoperimetric ratio} \textcolor{black}{of a simple closed plane curve with length $\ell$ that encloses the area $A$ is defined as $\ell^2/A$. This quantity is minimized for circles, where it takes the value $4\pi$ according to the isoperimetric inequality. It therefore quantifies how much a closed simple curve fails to be a circle. For the
convex body which is bounded by $\gamma$, it equals}
$$
\frac{4\pi^2}{\pi(1-4a^2)} = \frac{4\pi}{1-4a^2},
$$
which is maximal if $a=\frac18$. In that sense, among all the convex bodies obtained in this way, the case $a=\frac18$ provides us with the one which is farthest from a circle with respect to its isoperimetric ratio. 

\subsection{An isomorphism of $L^2(S^1,\C)$ based on triangles of constant width}

We introduce the notation $f_k(t) = f(kt)$ for any $f\in L^2(S^1,\C)$ and $k\in\mathbb Z$. Then the Fourier-series of $\gamma_k$ is given by \textcolor{black}{(recall that $u(t) = \mathrm e^{\mathrm it}$)}
$$
\gamma_k=u_k+2au_{-2k}-au_{4k},
$$
which provides us with a bounded linear operator $\mathcal T:L^2(S^1,\C)\to L^2(S^1,\C)$ given by
\begin{equation}\label{transform}
\mathcal T(f) = f+2a f_{-2}-af_4.
\end{equation}
Indeed, if $f\in L^2(S^1,\C)$ then $\|\mathcal T(f)\|_{L^2}\leqslant (1+3a)\|f\|_{L^2}$.

\textcolor{black}{As we show in Theorem~\ref{inversetheorem}, $\mathcal T$ admits an inverse, provided $0\leqslant a <\frac13$, which is given by 
\[
\mathcal R(y)(t) = \sum_{\ell = 0}^\infty(-1)^\ell\frac{\left(a+\sqrt{a^2+a}\right)^{\ell+1}-\left(a-\sqrt{a^2+a}\right)^{\ell+1}}{2\sqrt{a^2+a}}y\left((-1)^\ell 2^\ell t\right)
\]
if $0<a<\frac13$ and by $\mathcal R = \mathrm{id}_{L^2(S^1,\mathbb C)}$ if $a=0$. Furthermore, as follows from Theorem~\ref{C^k-restriction}, it can be shown that $\mathcal R$ restricts to a map $C^k(S^1,\mathbb C)\to C^k(S^1,\mathbb C)$, provided $a<{1}/{(2^k(2^k+2))}$ and that this regularity is sharp in the sense that if $a={1}/{(2^k(2^k+2))}$, then the image under $\mathcal R$ of a $C^k$ function is not $C^k$ in general (see Proposition~\ref{regularitysharp}) and the remark thereafter.}

\textcolor{black}{This regularity phenomenon is depicted in Figure \ref{rough24}, where the graph shows a function which is $C^1$ but not $C^2$ and in Figures \ref{rough8} and \ref{rough5}, where the graphs show continuous functions which fail to be differentiable.}

\begin{figure}[H]
\begin{center}
\includegraphics[scale=0.75]{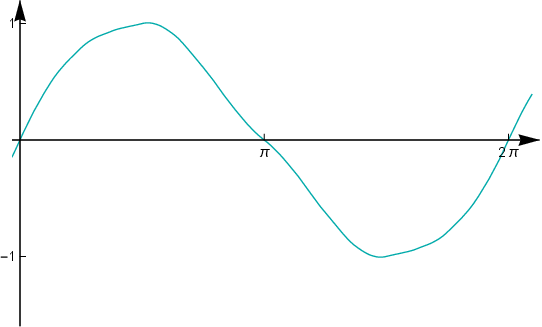}\quad\includegraphics[scale=0.75]{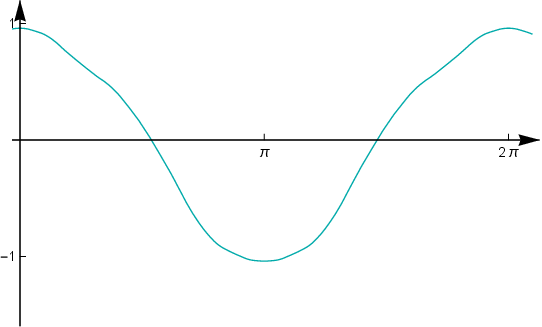}
\caption{The graphs of $\mathcal R(\sin)$ and $\mathcal R(\cos)$ as real-valued functions if $a=\frac{1}{24}$.\label{rough24}}
\end{center}
\end{figure}
\begin{figure}[H]
\begin{center}
\includegraphics[scale=0.75]{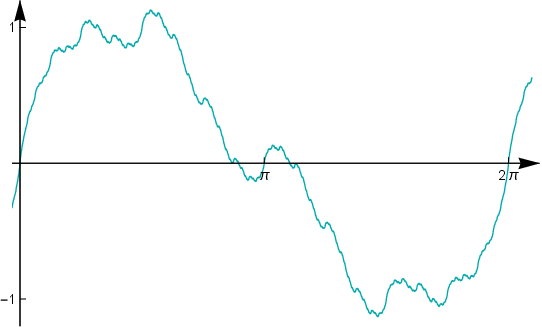}\quad\includegraphics[scale=0.75]{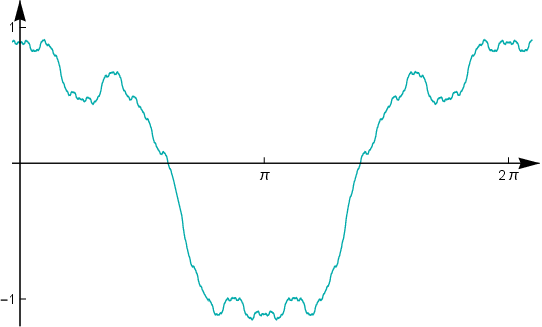}
\caption{The graphs of $\mathcal R(\sin)$ and $\mathcal R(\cos)$ as real-valued functions if $a=\frac18$.\label{rough8}}
\end{center}
\end{figure}

\begin{figure}[H]
\begin{center}
\includegraphics[scale=0.75]{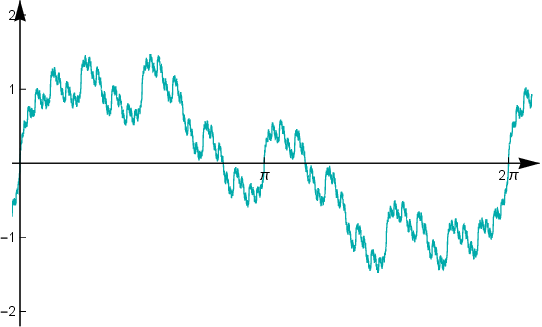}\quad\includegraphics[scale=0.75]{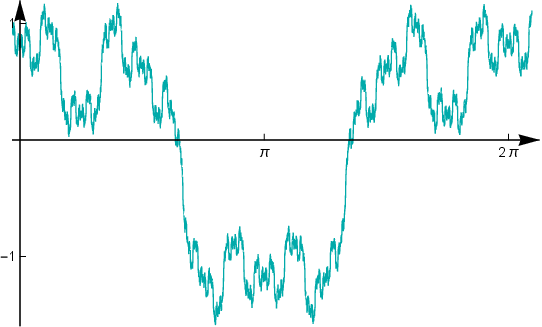}
\caption{The graphs of $\mathcal R(\sin)$ and $\mathcal R(\cos)$ as real-valued functions if $a=\frac15$.\label{rough5}}
\end{center}
\end{figure}
Note that by construction, if $a\in[0,\frac13)$, then $\mathcal R(\gamma_k)=u_k$ so that the we arrive at:
\begin{proposition}\label{fourier3}
If $a\in[0,\frac13)$, then set $\mathcal B = \{\gamma_k\}_{k\in\mathbb Z}$ is a Hilbert basis for the space $L^2(S^1,\mathbb C)$ equipped with the inner product $\mathcal R^*\langle\cdot{,}\cdot\rangle = \Langle\cdot{,}\cdot\Rangle$.
\end{proposition}
The inner product $\Langle\cdot{,}\cdot\Rangle$ is explicitly given by
$$
\Langle f,g\Rangle = \frac{1}{2\pi}\int_0^{2\pi} \mathcal R(f)(t)\overline{\mathcal R(g)(t)}\,\mathrm dt
$$
so that a periodic function $f\in L^2(S^1,\C)$ can be represented by
$$
\sum_{k\in\mathbb Z}\Langle f,\gamma_k\Rangle \gamma_k.
$$
Since
$$\begin{aligned}
\Langle f,\gamma_k\Rangle & = \frac{1}{2\pi}\int_0^{2\pi}\mathcal R(f)(t)\,\overline{\mathcal R(\gamma_k)(t)}\,\mathrm dt\\
& = \frac{1}{2\pi}\int_0^{2\pi}\mathcal R(f)(t)\,\overline{u_k(t)}\,\mathrm dt,
\end{aligned}
$$
the coefficients $\Langle f,\gamma_k\Rangle$ are precisely the Fourier-coefficients of $\mathcal R(f)$.

This can be restated as follows: If one defines

$$\begin{aligned}
\mathcal C_k(\mathcal F): L^2(S^1,\C) & \to \C\\
f & \mapsto \langle f,u_k\rangle
\\
\mathcal C_k(\mathcal R): L^2(S^1,\C) & \to \C\\
f & \mapsto \Langle f,\gamma_k\Rangle,
\end{aligned}$$
for all $k\in\Z$, then the following diagram commutes:

%

$$\xymatrix{
  L^2(S^1,\C) \ar[dr]_{\mathcal C_k(\mathcal R)} \ar[rr]^{\mathcal{R}} & & L^2(S^1,\C) \ar[dl]^{\mathcal C_k(\mathcal F)} \\
 & \C& }
$$

\begin{remark}
The map $\gamma$ yields a parametrization of a closed curve in $\C$ for any value of $a$. 
It follows from Proposition \ref{fourier3}, that a Fourier-type series with the unit circle replaced by $\gamma$ is available if $a<\frac13$. We therefore also consider curves $\gamma$, where $a\in\left(\frac18,\frac13\right)$. For $a$ in this range, $\gamma$ is no longer the boundary of a convex body but a curve with self-intersections. The image of $\gamma$ whenever $a=\frac15$ is shown in \textcolor{black}{Figure} \ref{trianglewithselfintersection}.
\begin{figure}[H]
\begin{center}
\includegraphics[scale=0.65]{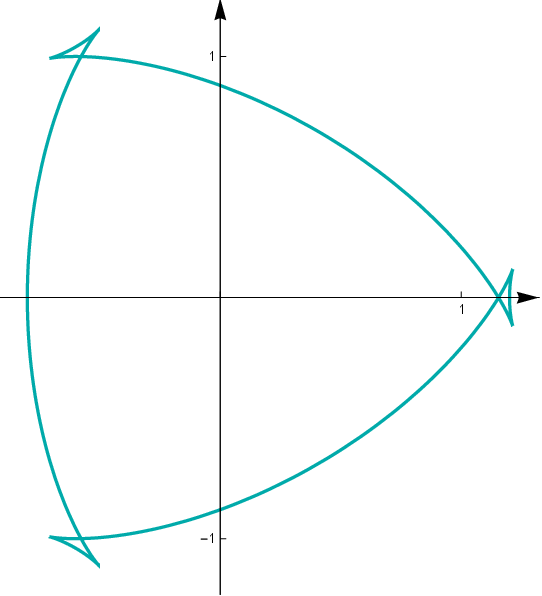}
\caption{Image of $\gamma$ if $a=\frac15$}
\label{trianglewithselfintersection}
\end{center}
\end{figure}
\end{remark}

\section{Application to letterforms}\label{type-design}

As long as $a\in\left[0,\frac13\right)$, we have that $\mathcal T\circ \mathcal R = \operatorname{id}_{L^2(S^1,\mathbb C)}$. We will now introduce a truncated version of $\mathcal R$:
\begin{equation}\begin{aligned}
\mathcal R_n : L^2(S^1,\mathbb C) & \to L^2(S^1,\mathbb C)\\
y \mapsto
\mathcal R_n(y): t & \mapsto \sum_{\ell = 0}^n(-1)^\ell\frac{\left(a+\sqrt{a^2+a}\right)^{\ell+1}-\left(a-\sqrt{a^2+a}\right)^{\ell+1}}{2\sqrt{a^2+a}}y((-1)^\ell 2^\ell  t).
\end{aligned}\label{truncatedbacktransform}
\end{equation}

The way we will use $\mathcal R_n$ in order to modify letterforms of which we think as H\"older-continuous elements $\mathrm L\in L^2(S^1,\C)$ is best summarized in terms of the following commutative diagram:

$$\xymatrix{
  L^2(S^1,\C) \ar[r]^{\mathcal T} \ar[rd]_{\mathrm{id}} & L^2(S^1,\C) \ar[r]^{\mathcal{R}_n} \ar[d]^{\mathcal R} & L^2(S^1,\C) \ar[dl]^{n\to\infty}  \\
& L^2(S^1,\C)& }
$$

In this way, $(\mathcal R_{n}\circ \mathcal T)(\mathrm L)\approx \mathrm L$, where the approximation becomes exact as $n\to\infty$. We obtain thus different approximations as we choose different values for $a\in (0,\frac13)$ and $n\in \N$.

As a case study, we will choose the values $a=\frac{1}{24}$, $a=\frac18$ and $a=\frac15$ for different values of $n$. Note that if $a=\frac{1}{2^k(2^k+2)}$, then the coefficients in the series defining $\mathcal R$ take a particularly nice form, since in this case
$$
\sqrt{a^2+a} = \frac{2^k+1}{2^k(2^k+2)}\in\mathbb Q.
$$
This occurs for $a=\frac{1}{8}$ and $a=\frac{1}{24}$, which correspond to the choices $k=1$ and $k=2$ \textcolor{black}{respectively}.

\textcolor{black}{Before presenting the examples, a few remarks regarding our implementation are in order: We start with a letterform which is given e.g.\ as a bitmap file and extract the contours of the letters using Mathematica, which results in a list of points in the plane representing the contour. From there, a discrete Fourier transform is performed which gives rise to the contour being given as a $2\pi$-periodic map, in fact, a truncated Fourier series with $m$ being very large, so that there is no visual difference between the original letter and its Fourier approximant. From there, it is straightforward to truncate this series or to compute $\mathcal R_n\circ \mathcal T$ using equations \eqref{transform} and \eqref{truncatedbacktransform}.}
\subsection{Approximation with $a=\frac{1}{24}$}

Recall that if $a=\frac{1}{24}$, then $\mathcal R$ restricts to a map $C^1(S^1,\C)\to C^1(S^1,\C)$ which does not in general preserve $C^2$ regularity according to Theorem \ref{C^k-restriction}. Here,
$$
\mathcal R(y)(t) =  \lim_{n\to\infty} \underbrace{\frac15\sum_{\ell=0}^n \frac{(-\frac{1}{2})^\ell 3^{\ell+1}+2}{6^\ell}\,y\left((-1)^\ell 2^\ell t\right)}_{=:\mathcal R_n(y)(t)}.
$$

Appliying $\mathcal R_n\circ\mathcal T$ to the letter \textcolor{black}{``S''} from the Romain 20 font 
yields the pictures in \textcolor{black}{Figure} \ref{1/24}.

\begin{figure}[H]
\begin{center}
\includegraphics[scale=0.36]{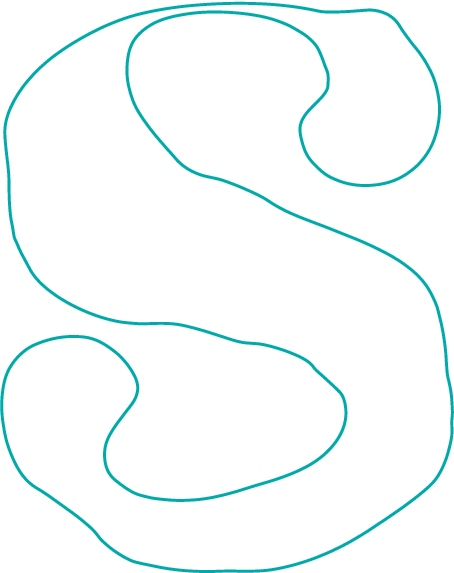}\qquad
\includegraphics[scale=0.36]{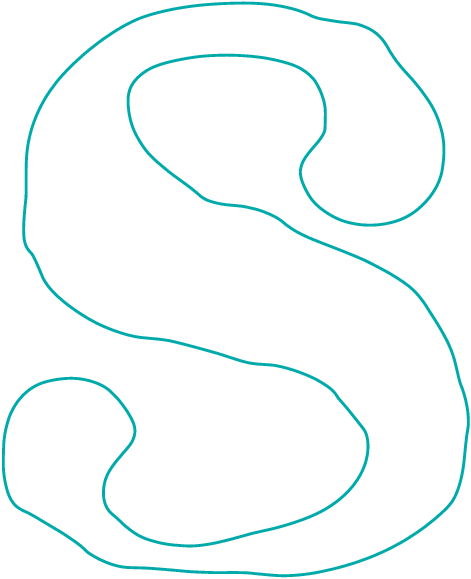}\qquad
\includegraphics[scale=0.36]{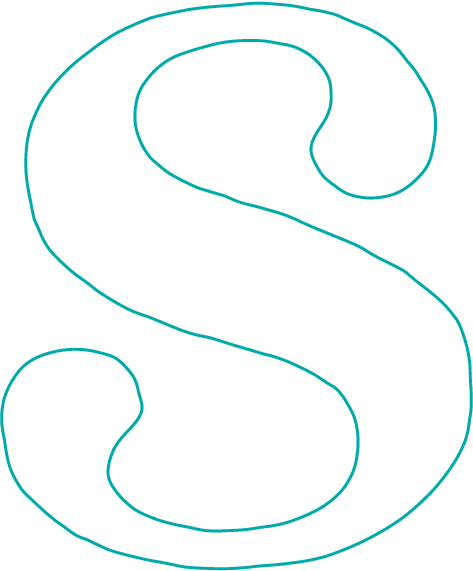}\qquad
\includegraphics[scale=0.36]{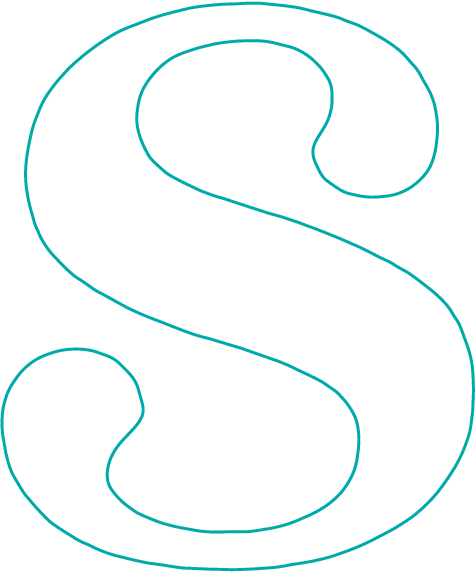}
\caption{Application of $\mathcal R_n\circ\mathcal T$ to the letter \textcolor{black}{``S''} from the Romain 20 font for the values $n=0,1,2,3$ if $a=\frac{1}{24}$}\label{1/24}
\end{center}
\end{figure}

Even though the curve $\gamma$ for $a=\frac{1}{24}$ is relatively close to being a circle (see the left image in \textcolor{black}{Figure} \ref{fig:trianglesofconstantwidth}), the application of $\mathcal R_n\circ\mathcal T$ to the letter \textcolor{black}{``S''} produces noticeably different curves, compare to \textcolor{black}{Figure} \ref{alice-fourier}.

\subsection{Approximation with $a=\frac{1}{8}$}

If $a=\frac{1}{8}$, $\gamma$ corresponds to the triangle of constant width which is furthest from the circle with respect to its isoperimetric ratio (see Subsection \ref{sec:2.2}) and $\mathcal R$ restricts to a map $C^0(S^1,\C)\to C^0(S^1,\C)$ which does not in general preserve $C^1$ regularity. Here,
$$
\mathcal R(y)(t) =  \lim_{n\to\infty} \underbrace{\frac13\sum_{\ell=0}^n \frac{(-1)^\ell 2^{\ell+1}+1}{4^\ell}\,y\left((-1)^\ell 2^\ell t\right)}_{=:\mathcal R_n(y)(t)}.
$$

\begin{figure}[H]
\begin{center}
\includegraphics[scale=0.45]{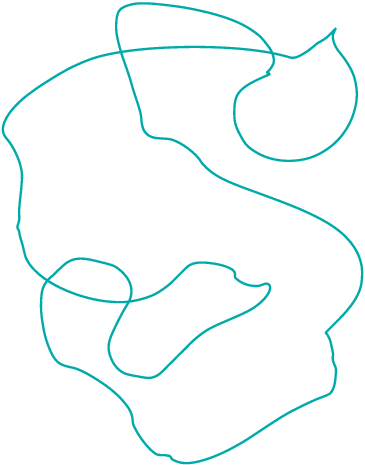}\qquad
\includegraphics[scale=0.45]{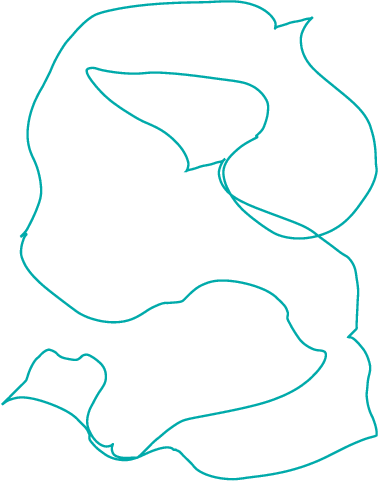}\qquad
\includegraphics[scale=0.45]{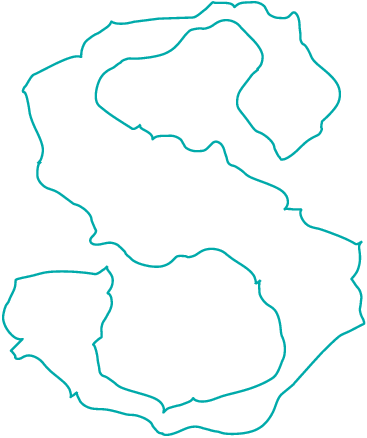}\qquad
\includegraphics[scale=0.45]{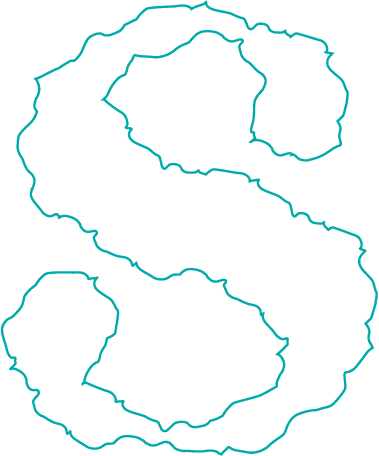}\\[0.5cm]
\includegraphics[scale=0.45]{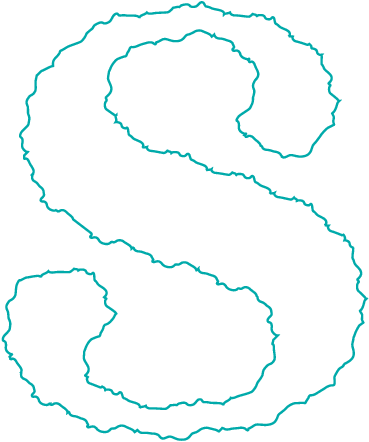}\qquad
\includegraphics[scale=0.45]{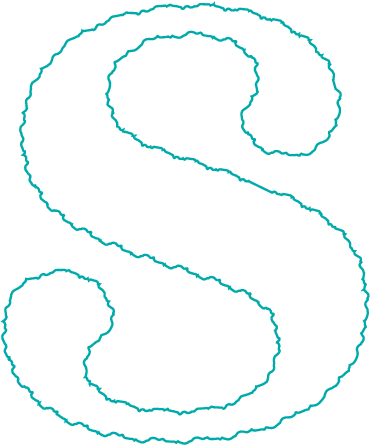}\qquad
\includegraphics[scale=0.45]{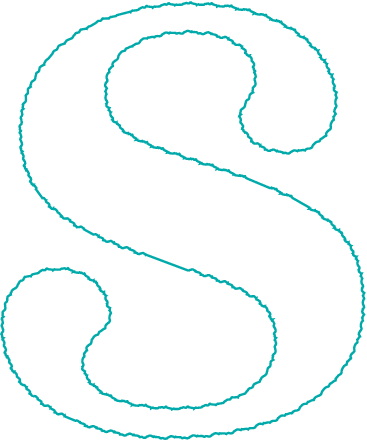}
\caption{Application of $\mathcal R_n\circ\mathcal T$ to the letter \textcolor{black}{``S''} from the Romain 20 font for the values $n=0,\ldots,6$ if $a=\frac18$}\label{1/8}
\end{center}
\end{figure}

\textcolor{black}{The resulting letterforms are shown in Figure \ref{1/8}}. It is an artefact of the regularity that the curves have angular points which give the letterforms a certain roughness like a vibrating, fuzzy object -- an effect which would barely be obtainable by the use of B\'ezier curves.

\subsection{Approximation with $a=\frac{1}{5}$}
Recall that if $\frac18 <a<\frac13$, then the image of $\gamma$ is no longer the boundary of a convex body, but $\gamma$ has self-intersections (see Figure~\ref{trianglewithselfintersection}). Nonetheless, $\mathcal R$ is still an isomorphism and preserves the continuity of mappings. If $a=\frac15$, one obtains the images given in \textcolor{black}{Figure} \ref{1/5}, when applying $\mathcal R_n\circ\mathcal T$ to the letter \textcolor{black}{``S''}. Here, the curves admit self-intersections and this produces a fractal-like decorative effect.

\begin{figure}[H]
\begin{center}
\includegraphics[scale=0.46]{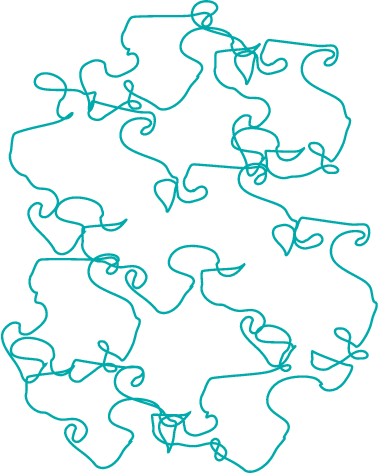}\qquad
\includegraphics[scale=0.46]{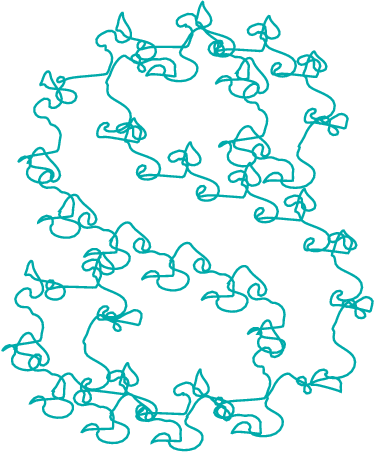}\qquad
\includegraphics[scale=0.46]{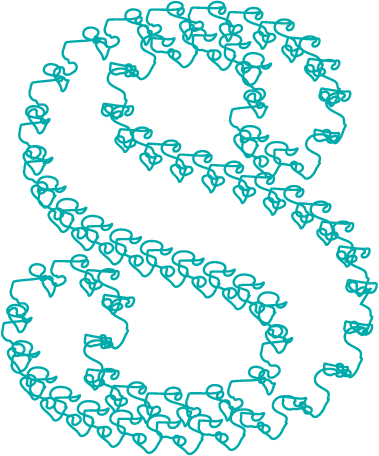}\qquad
\includegraphics[scale=0.46]{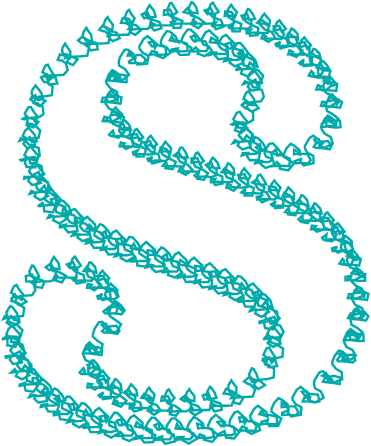}
\caption{Application of $\mathcal R_n\circ\mathcal T$ to the letter \textcolor{black}{``S''} from the Romain 20 font for the values $n=3,4,5,6$ if $a=\frac15$}\label{1/5}
\end{center}
\end{figure}

\subsection{\textcolor{black}{More realistic use case}}

\textcolor{black}{Until now, we have focused on modifying the contour of a letterform. In reality, however, a font is a filled character and generally small in size.} 
\textcolor{black}{Table \ref{table-font-trans} shows the letters ``HCFG'' from Mitim Sigma together with the letter ``S'' from the Romain 20 font in their original and modified versions as filled contours in order to show how the letterforms look like in reduced size.}

\begin{table}[H]
\begin{center}
\sffamily
\renewcommand{\arraystretch}{2.2}
\textcolor{black}{
\begin{tabular}{|>{\raggedleft\arraybackslash}m{5cm}|>{\centering\arraybackslash}b{2.5cm}|>{\centering\arraybackslash}b{2.1cm}|}\hline
& Mitim Sigma & Romain 20\\
\hline
Original letters & \includegraphics[scale=0.17]{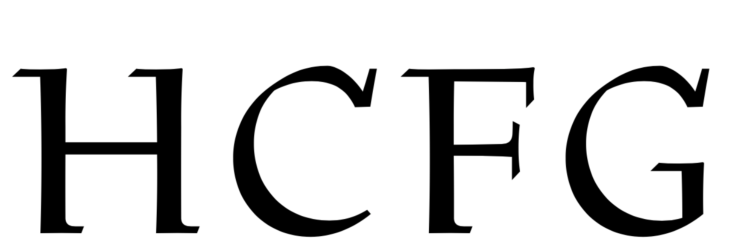}&\includegraphics[scale=0.1]{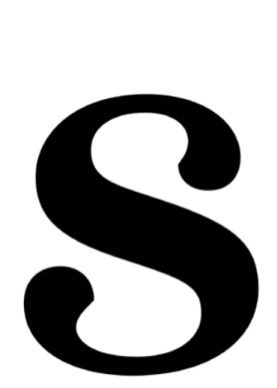}\\ \hline
Truncated Fourier with $m=10$ & \includegraphics[scale=0.06]{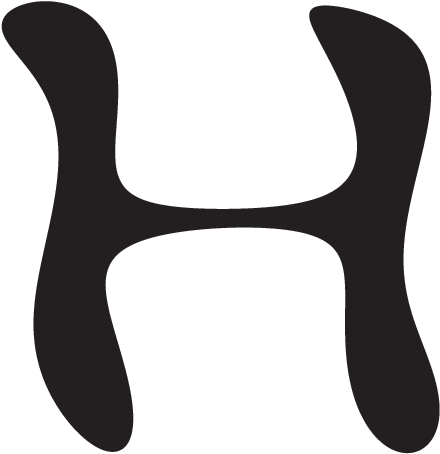}
\includegraphics[scale=0.05]{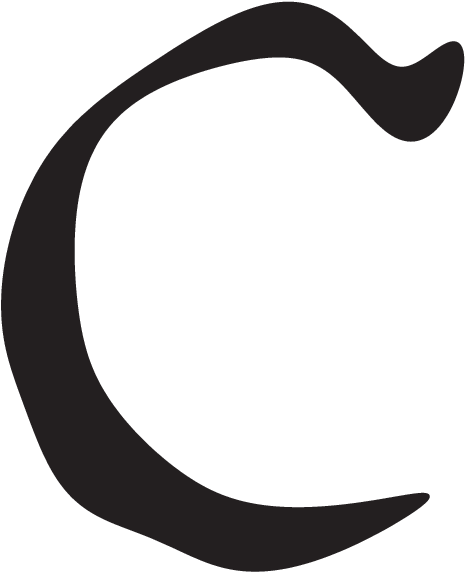}
\includegraphics[scale=0.05]{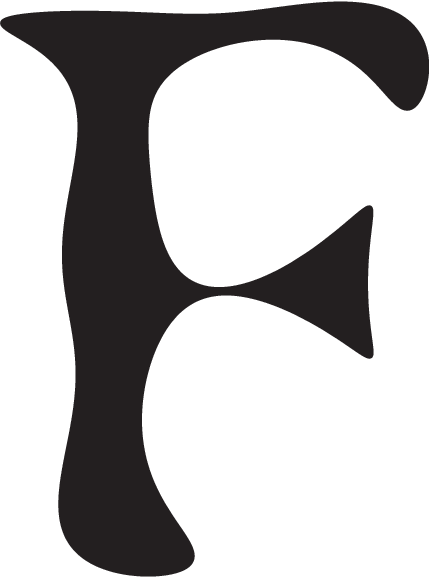}
\includegraphics[scale=0.05]{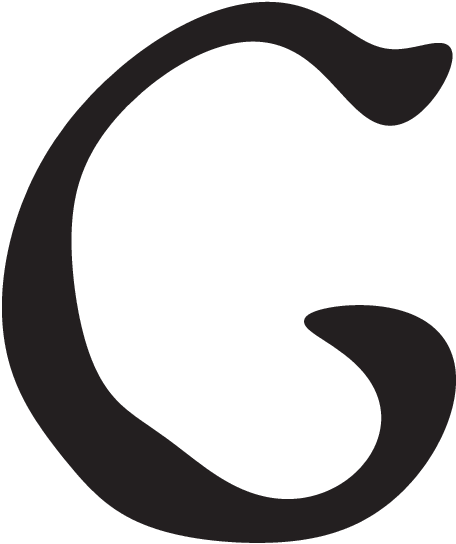}&
\includegraphics[scale=0.05]{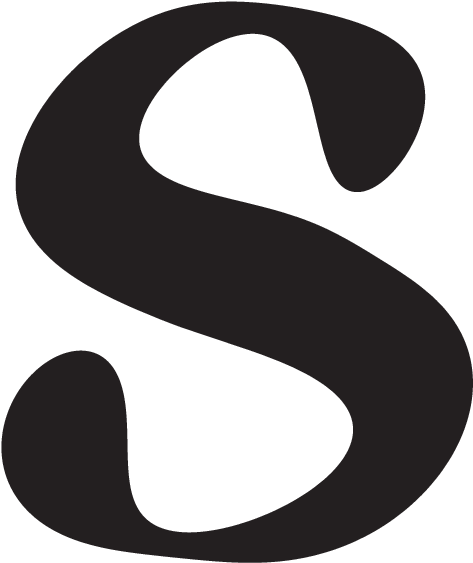}\\ \hline
$\mathcal R_2\circ \mathcal T$ with $a=\frac{1}{24}$&
\includegraphics[scale=0.065]{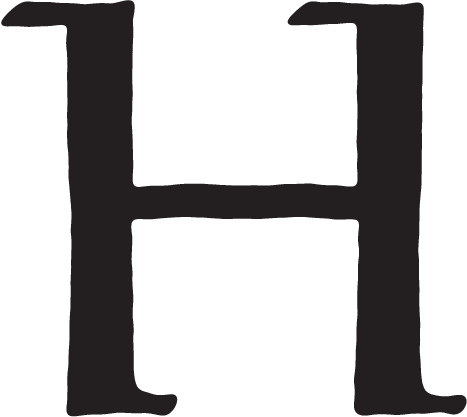}
\includegraphics[scale=0.05]{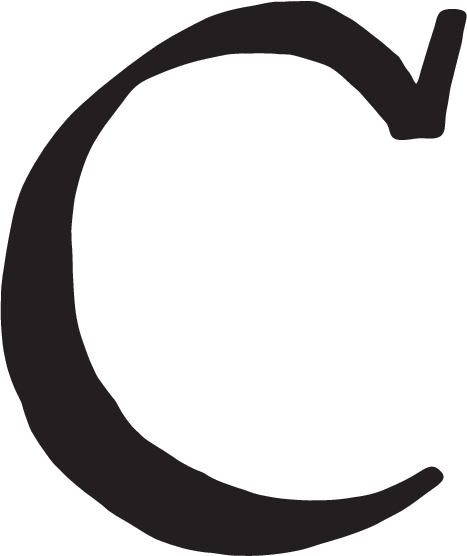}
\includegraphics[scale=0.05]{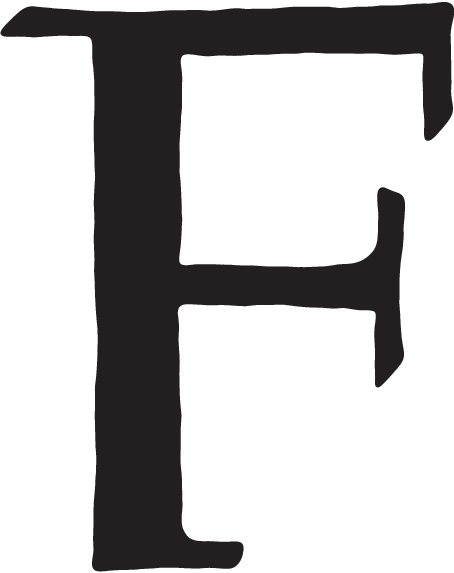}
\includegraphics[scale=0.05]{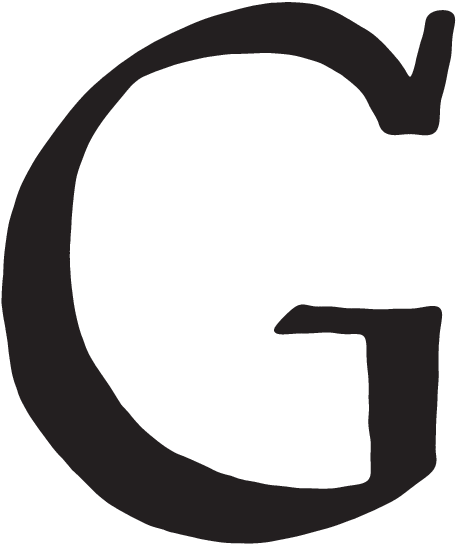}&
\includegraphics[scale=0.05]{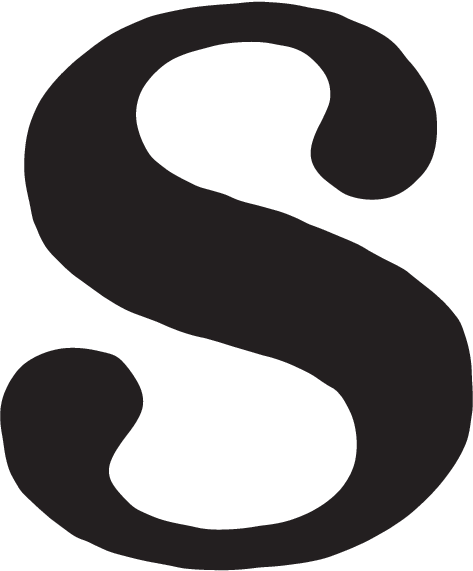}\\ \hline
$\mathcal R_2\circ \mathcal T $ with $a=\frac{1}{8}$&
\includegraphics[scale=0.065]{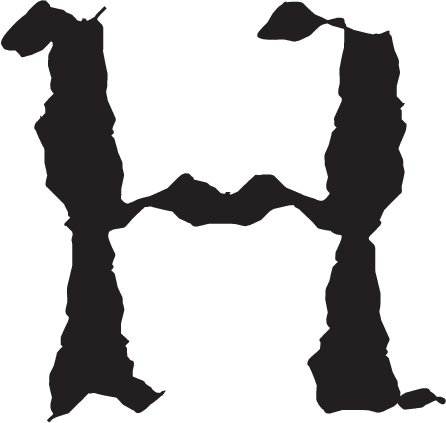}
\includegraphics[scale=0.05]{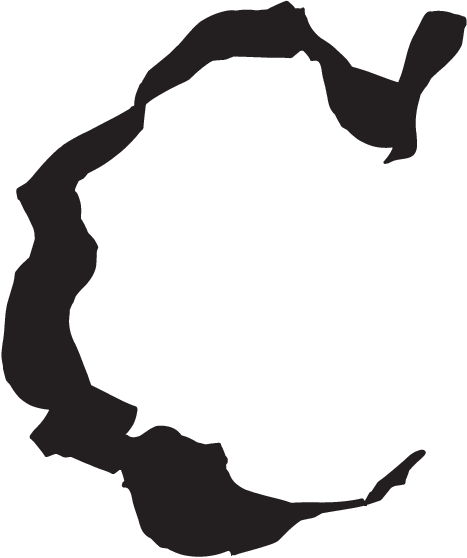}
\includegraphics[scale=0.05]{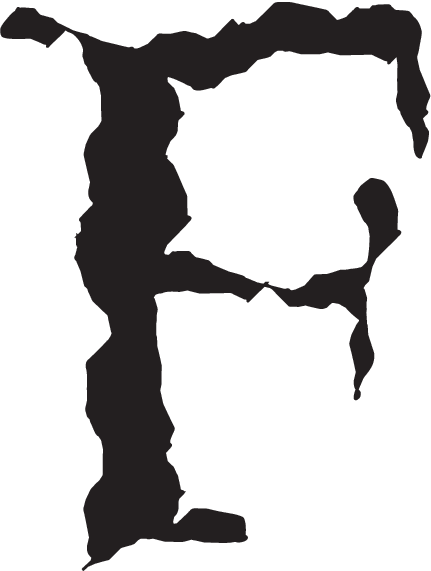}
\includegraphics[scale=0.05]{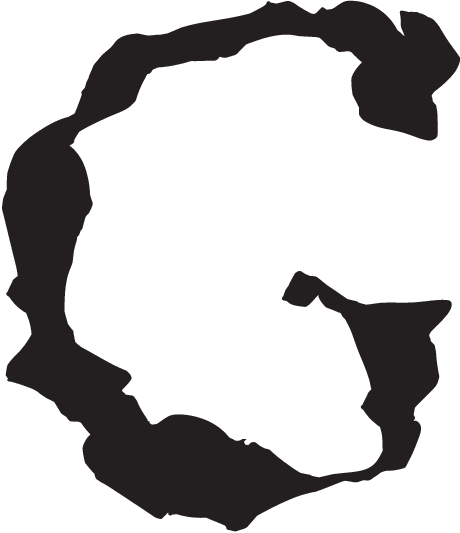}&
\includegraphics[scale=0.05]{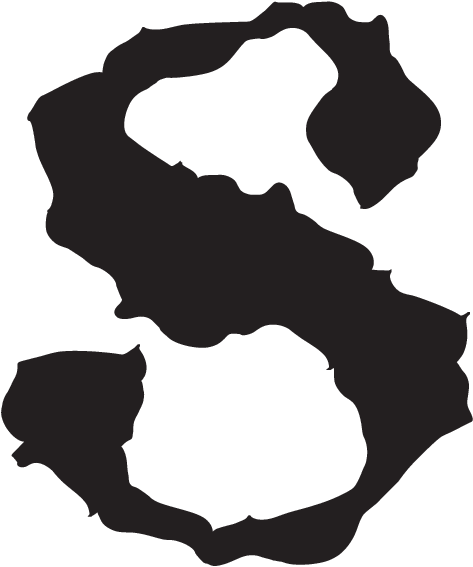}\\
\hline
\end{tabular}}\\[0.4cm]
\caption{\textcolor{black}{Transformations applied to some black-filled letters as they appear in practice.\label{table-font-trans}}}
\end{center}
\end{table}

\section{Conclusion \textcolor{black}{and perspectives}}

The approach to the creation of letterforms presented in this paper shows an alternative to the use of Bézier curves. Although we do not claim that the letters we have created in this way can be used in an unmodified way nor how such an approach could be incorporated in a type design software, it shows that a functional-analytical approach, which changes letters globally and not locally, takes into account the common criticism that Bézier letters tend to be too clean. For our choice of triangles of constant width, the resulting map $\mathcal R$ reveals non-trivial regularity phenomena, which allow\textcolor{black}{s} \textcolor{black}{us} to create letterforms with self-intersections, or letters 	\textcolor{black}{that} do not have tangents in general, creating, wobbly, \textcolor{black}{ornate, erratic or even decorative} effects. \textcolor{black}{These effects are explained quantitatively in Theorem~\ref{C^k-restriction} so that the choice of $a$ (and hence $k$) gives a way to control the visual smoothness of the obtained letters.}

\textcolor{black}{In our transformation, the constant-width triangles are all aligned in the same orientation. By introducing a parameter to control the phase of the curve, we could explore how this influences the geometry of the letterforms. This additional degree of freedom would provide new possibilities for variation, enriching the artistic potential of the design.}

In addition, the approach provides a wealth of exploration possibilities, because \textcolor{black}{basis functions other than} the ones we have chosen, are imaginable -- for example, non-differentiable curves such as regular $n$-gons might be used as a geometric basis object.

\section*{Acknowledgements}

We would like to sincerely thank Raphaela Häfliger, Alice Savoie, Kai Bernau, Nicolas Bernklau, Matthieu Cortat, Roland Früh and Radim Pe\v{s}ko from ECAL for their interest in this work as well as for their valuable comments. Also, we would like to specifically thank Alice Savoie for kindly making her Romain 20 letter ``S'' available to us and Radim Pe\v sko for providing letterforms of his font Mitim Sigma. Furthermore, we would like to thank the two anonymous referees for their important and valuable comments and suggestions, which have helped to improve the quality of this article. Also, we thank HEIA Fribourg for covering the Open Access Publication fee and the HC Fribourg-Gottéron for the inspiring choice of the letterforms HCFG. 

\section*{Funding}
This study has been conducted as part of the interdisciplinary research project ``Beyond B\'ezier -- Exploration of Drawing Methods in Type Design'' where the primary goal consisted in finding alternative approaches to the creation of letterforms other than B\'ezier curves. This project and hence this study were financed by the ``Design et arts visuels'' domain of HES-SO, University of Applied Sciences Western Switzerland.

\newpage
\appendix
\section{Mathematical proofs}

\begin{lemma}\label{lemmajacobsthal}
If $a>0$, the sequence $(j_n)_{n\in\mathbb N}$ with $j_0=1, j_1 = 2a$ verifying $j_n = a(2j_{n-1}+j_{n-2})$ for $n\geqslant 2$ admits an explicit representation
$$
j_n = \frac{\left(a+\sqrt{a^2+a}\right)^{n+1}-\left(a-\sqrt{a^2+a}\right)^{n+1}}{2\sqrt{a^2+a}}.
$$
\end{lemma}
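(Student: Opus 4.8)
The plan is to treat \eqref{jacobrecurrence} as exactly what it is: a homogeneous linear recurrence of order two with constant coefficients, $j_n = 2a\,j_{n-1} + a\,j_{n-2}$, and to solve it by the characteristic-equation method. First I would substitute the ansatz $j_n = x^n$ to obtain the characteristic polynomial $x^2 - 2ax - a$, and compute its two roots $\alpha = a + \sqrt{a^2+a}$ and $\beta = a - \sqrt{a^2+a}$. Since $a>0$ forces $a^2+a>0$, the discriminant is strictly positive, so the roots are real and, crucially, \emph{distinct}; this distinctness is what guarantees that the quantity $\alpha - \beta = 2\sqrt{a^2+a}$ appearing in the denominator of the claimed formula is nonzero, and it is precisely here that the hypothesis $a>0$ is used (for $a=0$ the two roots collide at $0$, but that degenerate case is already handled separately, since there $\mathcal T = \mathrm{id}$).

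Next I would observe that the asserted closed form is nothing but $j_n = \frac{\alpha^{n+1} - \beta^{n+1}}{\alpha - \beta}$, because $\alpha - \beta = 2\sqrt{a^2+a}$. Rather than solve the $2\times 2$ linear system for the coefficients $A,B$ in the general solution $A\alpha^n + B\beta^n$, I would verify directly that this candidate sequence satisfies both the prescribed initial data and the recurrence, and then invoke the uniqueness of solutions of a second-order recurrence with given $j_0,j_1$. For the initial conditions, setting $n=0$ yields $(\alpha-\beta)/(\alpha-\beta) = 1 = j_0$, while setting $n=1$ and factoring $\alpha^2 - \beta^2 = (\alpha-\beta)(\alpha+\beta)$ gives the value $\alpha+\beta = 2a = j_1$, using the Vieta relation $\alpha+\beta = 2a$.

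For the recurrence step I would multiply the desired identity $j_n = 2a\,j_{n-1} + a\,j_{n-2}$ through by $\alpha - \beta$ and regroup the resulting terms as $\alpha^{n-1}(\alpha^2 - 2a\alpha - a) - \beta^{n-1}(\beta^2 - 2a\beta - a)$; both parenthesized factors vanish exactly because $\alpha$ and $\beta$ are roots of the characteristic polynomial, so the identity holds for every $n\geqslant 2$. I do not expect any genuine obstacle here, as the computation is entirely routine; the only point demanding attention is the distinctness of the roots, which underpins the well-definedness of the stated quotient and motivates the standing assumption $a>0$.
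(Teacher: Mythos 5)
Your proof is correct and follows essentially the same route as the paper: both arguments rest on the characteristic-equation method, identifying the roots $a\pm\sqrt{a^2+a}$ and pinning down the solution via the initial data $j_0=1$, $j_1=2a$. The only (immaterial) difference is that you verify the stated closed form directly and appeal to uniqueness, whereas the paper solves for the coefficients of the linear combination in the two-dimensional solution space.
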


\begin{proof}
The real vector space $V$ of sequences $(z_n)_{n\in\mathbb N}$ that solve ${z_n = a(2z_{n-1}+z_{n-2})}$ for all $n\geqslant 2$ admits a basis consisting of exponential functions: Let $
z_n := b^n$, where $b\ne 0$. The recurrence relation implies
$$
b^2 = a(2b+1) \Longleftrightarrow b = a \pm \sqrt{a^2+a}.
$$
Letting $z^\pm_n =(a \pm \sqrt{a^2+a})^n$, it is straightforward to check that $\{(z^+_n)_{n\in\N},(z^-_n)_{n\in\N}\}$ forms a basis of $V$. Therefore, $j_n$ will be a linear combination of $(a + \sqrt{a^2+a})^n$ and $(a - \sqrt{a^2+a})^n$ and using $j_0=1$, $j_1=a$, we obtain the desired result.
\end{proof}

\begin{theorem}\label{inversetheorem}
Let $a\in[0,\frac13)$. Then $\mathcal T:L^2(S^1,\C)\to L^2(S^1,\C)$ defined by $\mathcal T(f) = f+2a f_{-2}-af_4$ is an isomorphism and its inverse $\mathcal R: L^2(S^1,\C)\to L^2(S^1,\C)$ is given by
$$
\mathcal R(y)(t) = \sum_{\ell = 0}^\infty(-1)^\ell\frac{\left(a+\sqrt{a^2+a}\right)^{\ell+1}-\left(a-\sqrt{a^2+a}\right)^{\ell+1}}{2\sqrt{a^2+a}}y\left((-1)^\ell 2^\ell t\right),
$$
if $a\ne 0$ and by the identity mapping on $L^2(S^1,\C)$ if $a=0$. Moreover, $\mathcal R$ restricts to a map $C^0(S^1,\mathbb C)\to C^0(S^1,\mathbb C)$.
\end{theorem}

\begin{proof}
If $a=0$, then $\mathcal T=\mathrm{id}_{L^2(S^1,\C)}$ and there is nothing to show. Let therefore $a>0$. If $\mathcal T(f) = y$ it holds that
\begin{align}\label{representf}
f & = y-2af_{-2}+af_4.
\end{align}
Since
\begin{equation}\label{startitequation}
f_{k}  = y_{k}-af_{-2k}+2af_{4k}
\end{equation}
for all $k\in\mathbb Z$ we can, using \eqref{startitequation}, replace $f_{-2}$ in \eqref{representf} and obtain
\begin{equation}\label{firstiteration}
f = y- 2ay_{-2}+2a^2f_{-4}-2a^2f_8.
\end{equation}
Next, $f_{-4}$ in \eqref{firstiteration} can be replaced using \eqref{startitequation} so that proceeding inductively in this way, $f$ can eventually be expressed as a formal series
\begin{equation}\label{formalseries}
f(t) = \sum_{\ell=0}^\infty (-1)^\ell j_\ell\, y\left((-1)^\ell 2^\ell t\right),
\end{equation}
where the coefficients $j_\ell$ can be checked to verify the following recurrence relation: $j_0 = 1, j_1 = 2a$ and
$j_n = a(2j_{n-1}+j_{n-2})\text{ for }n\geqslant 2.$
According to Lemma~\ref{lemmajacobsthal}, it holds that
$$
j_n = \frac{\left(a+\sqrt{a^2+a}\right)^{n+1}-\left(a-\sqrt{a^2+a}\right)^{n+1}}{2\sqrt{a^2+a}},
$$
so that according to \eqref{formalseries}, the inverse of $\mathcal T$ is given by the formal series $\mathcal T^{-1}=\mathcal R$:
$$
\mathcal R(y)(t) = \sum_{\ell=0}^\infty(-1)^\ell j_\ell \,y\left((-1)^\ell 2^\ell t\right).
$$
If $y\in L^2(S^1,\mathbb C)$, then $
\left\|\mathcal R(y)\right\|_{L^2}\leqslant\|y\|_{L^2} \sum_{\ell=0}^\infty j_\ell 
$
so that this expression is finite if the series
$
\sum_{\ell=0}^\infty j_\ell
$
converges. Since
$$
\lim_{n\to\infty}\frac{j_{n+1}}{j_n}=a+\sqrt{a^2+a},
$$
this series converges according to the ratio test if $a<\frac13$. Replacing the $L^2$-norm in the above computation by the $C^0$-norm, it is immediate that $\mathcal R$ restricts to a map $C^0(S^1,\mathbb C)\to C^0(S^1,\mathbb C)$ provided $a<\frac13$.
\end{proof}

\begin{theorem}\label{C^k-restriction}
Let $a\in[0,\frac13)$. If $a<\frac{1}{2^k(2^k+2)}$, then $\mathcal R$ restricts to a map $C^k(S^1,\mathbb C)\to C^k(S^1,\mathbb C)$.
\end{theorem}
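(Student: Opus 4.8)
The plan is to promote the a priori bound established just above the statement into an actual $C^k$ conclusion by invoking the classical term-by-term differentiation theorem for series of functions. Recall that $\mathcal R(y)$ is given by the series $\sum_{\ell=0}^\infty(-1)^\ell j_\ell\,y((-1)^\ell 2^\ell\,\cdot\,)$, which we already know converges in the $C^0$-norm whenever $a<\frac13$. Each summand is $C^k$ as soon as $y$ is, so the issue is purely one of convergence: I would verify that for $y\in C^k(S^1,\C)$ the series of $j$-th termwise derivatives converges uniformly on $S^1$ for \emph{every} $j\in\{0,1,\dots,k\}$, since this is precisely the hypothesis under which the differentiation theorem guarantees that the sum is $C^k$ with derivatives obtained by differentiating term by term.

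The heart of the matter is that the single assumption $a<\frac{1}{2^k(2^k+2)}$ secures all these uniform convergences simultaneously. Differentiating the $\ell$-th summand $j$ times produces a factor $((-1)^\ell 2^\ell)^j$, so its $C^0$-norm is bounded by $2^{j\ell}j_\ell\,\|y^{(j)}\|_{C^0}$, using that $y^{(j)}$ is periodic and hence $\sup_t|y^{(j)}((-1)^\ell 2^\ell t)|=\|y^{(j)}\|_{C^0}$. The $j$-th derivative series is therefore dominated by $\|y^{(j)}\|_{C^0}\sum_{\ell=0}^\infty 2^{j\ell}j_\ell$, and the ratio-test computation carried out above (with $k$ replaced by $j$) gives
$$
\lim_{n\to\infty}\frac{2^{j(n+1)}j_{n+1}}{2^{jn}j_n}=2^j\bigl(a+\sqrt{a^2+a}\,\bigr),
$$
which is strictly less than $1$ exactly when $a<\frac{1}{2^j(2^j+2)}$. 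Since $j\mapsto 2^j(2^j+2)$ is strictly increasing, the threshold $\frac{1}{2^j(2^j+2)}$ decreases in $j$, whence $a<\frac{1}{2^k(2^k+2)}\leqslant\frac{1}{2^j(2^j+2)}$ for every $j\leqslant k$. Consequently each of the derivative series converges uniformly and absolutely.

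I expect the only genuinely delicate point to be the passage from uniform convergence of the formally differentiated series \eqref{formalkderivative} to the assertion that $\mathcal R(y)$ is itself $C^k$ with \eqref{formalkderivative} as its $k$-th derivative: uniform convergence of the top-order series alone does not suffice, one must also control every intermediate order, and it is exactly the nesting of thresholds above that supplies this. With that control in hand the conclusion follows by a short induction on $j$ from $0$ to $k$, at each stage differentiating the uniformly convergent $j$-th derivative series using uniform convergence of the $(j{+}1)$-th, or equivalently by a single application of the differentiation theorem. This shows $\mathcal R(y)\in C^k(S^1,\C)$ and hence that $\mathcal R$ restricts to a map $C^k(S^1,\C)\to C^k(S^1,\C)$.
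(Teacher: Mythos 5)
Your proposal is correct and follows essentially the same route as the paper: bound the $C^0$-norm of the termwise $k$-th derivative series by $\|y^{(k)}\|_{C^0}\sum_{\ell}2^{k\ell}j_\ell$ and apply the ratio test, whose limit $2^k\bigl(a+\sqrt{a^2+a}\bigr)$ is below $1$ exactly when $a<\frac{1}{2^k(2^k+2)}$. Your additional observation that the thresholds $\frac{1}{2^j(2^j+2)}$ are nested, so that uniform convergence of all intermediate derivative series comes for free, is a correct and worthwhile tightening of a step the paper leaves implicit.
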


\begin{proof}
Suppose that $y\in C^k(S^1,\mathbb C)$, then the $k$-th derivative (taken term-wise) of $\mathcal R(y)$ is formally given by
\begin{equation}\label{formalkderivative}
\sum_{\ell=0}^\infty (-1)^{(k+1)\ell} 2^{k \ell}j_\ell y^{(k)}((-1)^\ell 2^\ell \, \cdot \,)
\end{equation}
and its $C^0$ norm will be bounded by
$
\|y^{(k)}\|_{C^0}\sum_{\ell = 0}^\infty 2^{k\ell}j_\ell.
$
This time, the ratio test yields
$$
\lim_{n\to\infty}\frac{2^{k(n+1)}j_{n+1}}{2^{kn}j_n}=2^k\left(a+\sqrt{a^2+a}\right),
$$
which is smaller than $1$ provided $a<\frac{1}{2^k(2^k+2)}$, so that in this case the series \eqref{formalkderivative} converges uniformly and hence $\mathcal R$ restricts to $C^k(S^1,\mathbb C)\to C^k(S^1,\mathbb C)$.
\end{proof}

\begin{proposition}\label{regularitysharp}
If $y(t) = \sin(t)$ and $a=\frac18$, then $ f=\mathcal R(y):S^1\to\C$ is not differentiable in $t=0$.
\end{proposition}
\begin{proof}
Let $a=\frac18$ and $y(t) = \sin(t)$, then
$$
f(t)=\mathcal{R}(y)(t)=\frac13\sum_{\ell=0}^\infty \frac{ 2^{\ell+1}+(-1)^\ell}{4^\ell}\sin\left(2^\ell t\right)\color{black}{.}
$$
We will show that if $t_n = \frac{\pi}{2^n}$, then the sequence
$$
\frac{f(t_n)-f(0)}{t_n-0} = \frac{f(t_n)}{t_n}
$$
will diverge as $n\to\infty$.
We will use the inequality
$$\frac{2^{\ell+1}+(-1)^\ell}{4^\ell} \geqslant \frac{1}{2^\ell}$$
for all $\ell\in\mathbb N$. Since $\sin(2^\ell t_n) = \sin\left(2^{\ell-n}\pi\right)$, this quantity vanishes if $\ell\geqslant n$. If $\ell < n$ we can use the inequality $\sin(x)\geqslant \frac2\pi x$ that holds on $[0,\frac\pi2]$ in order to obtain
$$
\begin{aligned}
f(t_n) & \geqslant \frac13\sum_{\ell=0}^{n-1} \frac{1}{2^\ell}\frac{2}{\pi} 2^{\ell-n}\pi=\frac{2n}{3\cdot 2^n}.
\end{aligned}$$
Therefore we have
$$
\frac{f(t_n)}{t_n}\geqslant \frac{2^n}{\pi}\cdot \frac{2n}{3\cdot 2^n} = \frac{2n}{3\pi}\stackrel{n\to\infty}{\longrightarrow}\infty.
$$
\end{proof}

\begin{remark}
The previous proposition shows that the image under $\mathcal R$, if $a=\frac18$, of a $C^1$ function $S^1\to \C$ is not $C^1$ in general and the result can be adapted in order to show that if $a={1}/{(2^k(2^k+2))}$, then the image under $\mathcal R$ of a $C^k$ function is not $C^k$ in general.
\end{remark}

\newpage
\newcommand{\etalchar}[1]{$^{#1}$}

\end{document}